\theoremstyle{plain}
\newtheorem{thm}{Theorem}[section]
\newtheorem{lem}[thm]{Lemma}
\newtheorem{cor}[thm]{Corollary}
\theoremstyle{definition}
\newtheorem*{thA}{Theorem A}
\newtheorem*{thB}{Theorem B}
\newtheorem*{thC}{Theorem C}
\newtheorem{deff}[thm]{Definition}
\newtheorem{rem}[thm]{Remark}
\newtheorem{remark}[thm]{Remark}
\theoremstyle{remark}
\newcommand*{\e}{\ensuremath{\varepsilon}}
\newcommand*{\Hom}{\ensuremath{\text{\upshape Hom}}}
\newcommand*{\HL}{\ensuremath{\text{\upshape H}}}
\newcommand*{\Ker}{\ensuremath{\text{\upshape Ker}}}
\newcommand*{\ad}{\ensuremath{\text{\upshape ad}}}
\newcommand*{\Img}{\ensuremath{\text{\upshape Im}}}
\newcommand*{\Id}{\ensuremath{\text{\upshape Id}}}
\newcommand*{\Chara}{\ensuremath{\text{\upshape char}}}
\def\dim{\operatorname{dim}}
\def\chara{\operatorname{char}}
\begin{document}
\thispagestyle{empty}

\title{A note on Masuoka's Theorem for semisimple irreducible Hopf algebras}

\author{Xingting Wang}
\email{xingting.wang@howard.edu}
\address{Department of Mathematics\\Howard University, Washington, D.C. 20059}

\thanks{The author was partially supported by U.~S.~National Science Foundation.}

\keywords{Hopf algebra, semisimple, positive characteristic, irreducible, connected, group algebra}

\subjclass[2010]{16T05}

%\thanks{* corresponding author. Research of the second author supported by the Louisiana Board of Regents [LEQSF(2012-15)-RD-A-20].}

%\keywords{Hopf algebras}

%\subjclass[2010]{16W30}

\begin{abstract}
Masuoka proved (2009) that a finite-dimensional irreducible Hopf algebra $H$ in positive characteristic is semisimple if and only if it is commutative semisimple if and only if the Hopf subalgebra generated by all primitives is semisimple. In this note, we give another proof of this result by using Hochschild cohomology of coalgebras. 
\end{abstract}

\maketitle

\section{introduction}
The classification of semisimple Hopf algebras over an algebraically closed field is an open question \cite{andruskiewitsch2000finite}. By a result of Larson and Radford \cite{larson1988finite}, semisimple Hopf algebras are cosemisimple in characteristic zero. Nonetheless, there are plenty examples of semisimple Hopf algebras which are not cosemisimple in positive characteristic. For instance, suppose $p$ is a prime number and the base field $k$ has characteristic $p$. Then for any $p$-group $G$, $(kG)^*$ is semisimple but not cosemisipmle. Next, we recall an old theorem of Hochschild about restricted Lie algebras. Still assume $\Chara(k)=p$ and let $\mathfrak g$ be a restricted Lie algebra over $k$. Denote $u(\mathfrak g)$ as the restricted enveloping algebra of $\mathfrak g$. Hochschild \cite{hochschild1954representations} proves that $u(\mathfrak g)$ is semisimple if and only if $\mathfrak g$ is abelian and $\mathfrak g=k\mathfrak g^p$. From this, we can obtain the following assertion:

\begin{thA}[Hochschild]
Suppose $\Chara(k)=p$ and $k=\bar{k}$. Let $H$ be a finite-dimensional cocommutative connected Hopf algebra over $k$. Denote $\mathfrak g$ as the primitive space of $H$ and further assume that $H$ is primitively generated by $\mathfrak g$. Then the following are equivalent:
\begin{itemize}
\item[(1)] $H$ is semisimple.
\item[(2)] The restricted map ($p$-th power map) on $\mathfrak g$ is bijective.
\item[(3)] $H\cong (kG)^*$, for $G=(C_p)^n$ and $C_p$ the cyclic group of order $p$.
\end{itemize}
\end{thA}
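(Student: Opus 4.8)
\emph{Step 1: reduction to Hochschild's criterion.}
Since $\chara k=p$, the primitive space $\mathfrak g=\Prim(H)$ is a restricted Lie subalgebra of $H$, so the inclusion $\mathfrak g\hookrightarrow H$ induces a Hopf algebra map $u(\mathfrak g)\to H$, surjective because $H$ is primitively generated by $\mathfrak g$. Comparing coradical filtrations — or, dually, invoking the Demazure--Gabriel correspondence between finite group schemes of height $\le 1$ and finite-dimensional restricted Lie algebras applied to $\Spec H^{*}$ — this map is an isomorphism, so $H\cong u(\mathfrak g)$ and $\dim_k H=p^{\dim_k\mathfrak g}$. By Hochschild's theorem quoted above, $H$ is semisimple iff $\mathfrak g$ is abelian and the restricted map $[p]$ on $\mathfrak g$ is surjective. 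When $\mathfrak g$ is abelian, $[p]$ is additive with $(cx)^{[p]}=c^{p}x^{[p]}$, i.e.\ Frobenius-semilinear; over the perfect field $k$ and on the finite-dimensional space $\mathfrak g$ such a map is surjective iff bijective. This already yields $(1)\Rightarrow(2)$.

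\emph{Step 2: the group-algebra case.}
A direct computation shows $(kC_p)^{*}$ is connected cocommutative of dimension $p$ with one-dimensional primitive space spanned by a functional $f$, and evaluating $f^{p}$ and $f$ on the grouplikes of $kC_p$ gives $f^{p}=f$; hence $(kC_p)^{*}\cong k[x]/(x^{p}-x)$ with $x$ primitive. Therefore, for $G=(C_p)^{n}$,
$H=(kG)^{*}\cong\bigotimes_{i=1}^{n}k[x_i]/(x_i^{p}-x_i)\cong k[x_1,\dots,x_n]/(x_i^{p}-x_i)$,
which is a product of copies of $k$, hence semisimple, with primitive space $\mathfrak g=\sum_i kx_i$ on which $[p]$ sends $x_i\mapsto x_i$ and so is bijective. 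Thus $(3)\Rightarrow(1)$ and $(3)\Rightarrow(2)$, and it remains to prove $(2)\Rightarrow(3)$.

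\emph{Step 3: $(2)\Rightarrow(3)$, the main point.}
Assume $[p]$ is bijective on $\mathfrak g$; the crucial claim is that $\mathfrak g$ is then abelian. In $u=u(\mathfrak g)$ the restricted map is the $p$-th power, and since $J:=J(u)$ is nilpotent, $x\in J\cap\mathfrak g$ forces $x^{[p]^{m}}=x^{p^{m}}=0$ for large $m$, hence $x=0$; so $\mathfrak g$ embeds as a restricted Lie subalgebra of $\bar u:=u/J\cong\prod_i M_{n_i}(k)$ (using $k=\bar k$) and generates it. For $x\in\mathfrak g$ the elements $x,x^{[p]}=x^{p},x^{[p]^{2}}=x^{p^{2}},\dots$ are powers of $x$ and so commute, so their span $\mathfrak t_x\subseteq\mathfrak g$ is an \emph{abelian} restricted subalgebra on which $[p]$ is injective, hence bijective (Frobenius-semilinear, finite-dimensional, $k$ perfect); thus $\mathfrak t_x=\mathfrak t_x^{[p]}$, and writing $t=s^{p^{m}}$ with $s\in\mathfrak t_x$ and $p^{m}$ exceeding all $n_i$ kills the nilpotent part of the Jordan decomposition of $s$ in $\bar u$, so every element of $\mathfrak t_x$, in particular $x$, is semisimple in $\bar u$. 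Hence every element of $\mathfrak g\subseteq\prod_i M_{n_i}(k)$ is semisimple. Now a Lie subalgebra $\mathfrak h\subseteq M_n(k)$ all of whose elements are semisimple is abelian: each $\ad(x)|_{\mathfrak h}$ is semisimple, and if $0\ne e\in\mathfrak h$ were an eigenvector of $\ad(x)$, $x\in\mathfrak h$, with nonzero eigenvalue $\lambda$, then replacing $x$ by $\lambda^{-1}x\in\mathfrak h$ we get $[x,e]=e$, whence for a $\beta$-eigenvector $v$ of the semisimple matrix $e$ ($V_\gamma$ denoting its $\gamma$-eigenspace) we have $e(xv)=x(ev)-ev=\beta(xv)-\beta v$, i.e.\ $(e-\beta)(xv)=-\beta v$; this is impossible when $\beta\ne0$, since the left-hand side lies in $\bigoplus_{\gamma\ne\beta}V_\gamma$ while $-\beta v\in V_\beta\setminus\{0\}$, so $e$ has only the eigenvalue $0$ and hence $e=0$ — therefore each $\ad(x)|_{\mathfrak h}$ has spectrum $\{0\}$ and, being semisimple, vanishes. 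Projecting $\mathfrak g$ into a factor $M_{n_i}(k)$ with $n_i\ge2$ would then give an abelian image generating $M_{n_i}(k)$, which is absurd; so all $n_i=1$, $\bar u$ is commutative, and $[\mathfrak g,\mathfrak g]\subseteq J\cap\mathfrak g=0$. Finally, $\mathfrak g$ abelian with $[p]$ bijective and Frobenius-semilinear admits, by descent over the perfect field $k$, a $k$-basis $x_1,\dots,x_n$ with $x_i^{[p]}=x_i$, whence $u(\mathfrak g)=\bigotimes_i k[x_i]/(x_i^{p}-x_i)\cong(k(C_p)^{n})^{*}$, giving $(3)$. Combining, $(1)\Rightarrow(2)\Rightarrow(3)\Rightarrow(1)$.

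The principal obstacle is exactly the implication ``$[p]$ bijective $\Rightarrow\mathfrak g$ abelian'' (equivalently: a finite-dimensional restricted Lie algebra with bijective restriction map is a torus); the route above passes through the Wedderburn decomposition of $u(\mathfrak g)/J$ and the elementary fact that a linear Lie algebra over $\bar k$ consisting of semisimple elements is abelian, and one should check carefully that the restricted structure is inherited correctly by $u(\mathfrak g)/J$ and its matrix factors.
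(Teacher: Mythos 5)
Your proof is correct, but it is doing considerably more work than the paper, which offers no proof of Theorem A at all: it simply quotes Hochschild's 1954 criterion ($u(\mathfrak g)$ is semisimple if and only if $\mathfrak g$ is abelian and $\mathfrak g=k\mathfrak g^p$), combines it with the identification $H\cong u(\mathfrak g)$ for a primitively generated cocommutative connected $H$ (the fact recorded later in Remark \ref{dim} via \cite{wang2012connected}), and records Theorem A as an immediate consequence. You borrow Hochschild only for $(1)\Rightarrow(2)$ and then give an independent proof of $(2)\Rightarrow(3)\Rightarrow(1)$; in particular you reprove the genuinely nontrivial point --- that bijectivity of $[p]$ forces $\mathfrak g$ to be abelian, i.e.\ that a finite-dimensional restricted Lie algebra with bijective restriction map is a torus --- by passing to the Wedderburn quotient $u(\mathfrak g)/J$, showing every element of $\mathfrak g$ is semisimple there via $\mathfrak t_x=\mathfrak t_x^{[p]^m}$ and the Jordan decomposition, and invoking the elementary fact that a linear Lie algebra over $\bar k$ all of whose elements are semisimple is abelian. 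All of these steps check out (the eigenvector computation with $[x,e]=e$, the reduction of each matrix factor to $n_i=1$, and $[\mathfrak g,\mathfrak g]\subseteq J\cap\mathfrak g=0$ are all sound), so your route buys a self-contained argument at the cost of length, while the paper's route buys brevity by outsourcing everything to the citation. Two small points to tighten: the existence of a basis of $[p]$-fixed vectors for an abelian $\mathfrak g$ with bijective semilinear $[p]$ requires $k$ to be separably closed, not merely perfect (one must solve Artin--Schreier-type equations), which is harmless here only because $k=\bar k$ is assumed; and the isomorphism $H\cong u(\mathfrak g)$, which you justify with a wave at coradical filtrations, is exactly the statement cited in Remark \ref{dim}, so you could cite it rather than sketch it.
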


In algebraic group theory, Nagata \cite{Na} proves that a fully reducible connected affine algebraic group in positive characteristic is a torus. Later, this result was generalized independently by Demazure-Gabriel \cite{demazure1970groupes} and Sweedler \cite{sweedler1971connected} to any connected fully reducible affine group schemes. Here we only recall the Hopf algebra version of the generalized Nagata's theorem. Suppose $\Chara(k)=p$ and $H$ is a commutative cosemisimple Hopf algebra over $k$, whose maximal separable subalgebra is $k$. Then $H$ is  cocommutative. In finite-dimensional case, we can dualize the result for semisimple cocommutative Hopf algebras.

\begin{thB}[Demazure-Gabriel and Sweedler]
Suppose $\Chara (k)=p$ and $k=\bar{k}$. Let $H$ be a finite-dimensional cocommutative connected Hopf algebra over $k$. Then the following are equivalent
\begin{itemize}
\item[(1)] $H$ is semisimple.
\item[(2)] $H\cong (kG)^*$, for $G$ an abelian $p$-group.
\end{itemize}
\end{thB}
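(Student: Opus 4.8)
The plan is to prove the two implications separately, with Theorem~A serving as the engine in the primitively generated case and an induction on $\dim_k H$ reducing to it. The implication $(2)\Rightarrow(1)$ is elementary: if $G$ is an abelian $p$-group, then $(kG)^{*}$ is the function algebra $k^{G}\cong k^{|G|}$, which is semisimple; it is commutative and, as $G$ is abelian, cocommutative, hence pointed over $k=\bar k$; and its group-like elements are the homomorphisms $G\to k^{\times}$, of which only the trivial one exists because $G$ is a $p$-group and $\Chara k=p$. So $(kG)^{*}$ is pointed with a single group-like, that is, connected, and (1) holds.

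For $(1)\Rightarrow(2)$ I would induct on $\dim_k H$, the case $\dim_k H=1$ being trivial. Assume $\dim_k H>1$, so $\mathfrak g:=\Prim H\neq 0$. Using cocommutativity one checks that $\mathfrak g$ is stable under the adjoint action, hence a restricted Lie subalgebra, and that the subalgebra $K$ it generates is a normal Hopf subalgebra which is connected, cocommutative and satisfies $\Prim K=\mathfrak g$; thus $K$ is primitively generated by its primitive space. Since $H$ is semisimple it is free as a left $K$-module by the Nichols--Zoeller theorem, so the restriction to $K$ of a projective $H$-module is again projective; in particular the trivial $K$-module is projective, i.e.\ $K$ is semisimple. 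Theorem~A applied to $K$ now gives that the $p$-operation on $\mathfrak g$ is bijective and $K\cong\bigl(k(C_{p})^{n}\bigr)^{*}$ with $n=\dim_k\mathfrak g\geq 1$. The quotient $\bar H:=H/HK^{+}$ is a cocommutative connected Hopf algebra with $\dim_k\bar H=\dim_k H/\dim_k K<\dim_k H$, and it is semisimple because the image in $\bar H$ of a normalized left integral of $H$ is a normalized left integral of $\bar H$; by induction $\bar H\cong(kF)^{*}$ for an abelian $p$-group $F$.

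What remains is to glue these data into a statement about $H$, and I expect this to be the main obstacle. It is enough to show that $H$ is commutative: then $A:=H^{*}$ is commutative (from cocommutativity of $H$), cocommutative (from commutativity of $H$), local (from connectedness of $H$) and cosemisimple (from semisimplicity of $H$); being cocommutative over $k=\bar k$ it is pointed, and being cosemisimple it equals its own coradical, so $A=kG$ where $G$ is the group of its group-likes, which is finite, abelian (as $A$ is cocommutative) and a $p$-group (as $A$ is local); hence $H=A^{*}\cong(kG)^{*}$. To prove commutativity of $H$, observe first that the adjoint action of $H$ on $K$ factors through $\bar H$ because $K$ is commutative, making $K\cong k^{p^{n}}$ a $\bar H$-module algebra, equivalently a right $\bar H^{*}$-comodule algebra; since $\bar H$ is connected, $\bar H^{*}$ is local, so the only idempotents of $k^{p^{n}}\otimes\bar H^{*}$ lie in $k^{p^{n}}\otimes 1$, which forces this coaction, and hence the adjoint action, to be trivial. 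Therefore $K$ is central in $H$, so $H$ is a central extension of $\bar H\cong(kF)^{*}$ by $K\cong\bigl(k(C_{p})^{n}\bigr)^{*}$.

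To conclude that such a central extension is commutative, dualize and apply $\Spec$: the group scheme $\Spec A$ is a central extension of the diagonalizable group $\Spec kF$ by $\mu_{p}^{n}=\Spec k(C_{p})^{n}$, and its commutator morphism descends to a bimultiplicative pairing $\Spec kF\times\Spec kF\to\mu_{p}^{n}$, i.e.\ to a homomorphism from $\Spec kF$ into $\underline{\Hom}(\Spec kF,\mu_{p}^{n})$. The latter is a subgroup scheme of $\underline{\Hom}(\Spec kF,\mathbb G_{m}^{n})$, which is the constant group scheme attached to $F^{n}$, hence is étale; since $\Spec kF$ is infinitesimal the homomorphism is trivial, the commutator pairing vanishes, and $H$ is commutative, completing the induction. (Alternatively, this gluing step can simply be quoted from the work of Demazure--Gabriel and Sweedler cited above.)
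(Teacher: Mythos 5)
Your argument is essentially correct, but it takes a genuinely different route from the paper. Note first that the paper never proves Theorem B on its own: it is quoted from Demazure--Gabriel and Sweedler, and within the paper's logic it is recovered as the special (cocommutative) case of Theorem \ref{TMR}, so that is the proof to compare against. There, the semisimplicity of $K$ is obtained exactly as you obtain it (Nichols--Zoeller freeness plus the Maschke-type criterion), but the passage from $K$ up to $H$ is organized as a chain of essential extensions $K=F_0H\subsetneq\cdots\subsetneq F_nH=H$, each stage generated over the previous one $L$ by a single element $z$ whose comultiplication is controlled by a $2$-cocycle in $\HL^2(k,L)$; Lemma \ref{irreduciblesemisimple} then produces a separable $p$-polynomial relation $z^{p^l}+\cdots+\lambda_1z+a=0$ with $\lambda_1\neq 0$ and deduces semisimplicity from a crossed-product decomposition, while commutativity comes from the commutator computation showing $[y,z]$ is primitive and annihilated by $\lambda_1$. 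You instead exploit cocommutativity from the outset: $K$ is normal, you pass to $\bar H=H/HK^{+}$ and induct on dimension, prove centrality of $K$ by the idempotent/local-ring argument, and kill the commutator pairing of the resulting central extension by Cartier duality (no nontrivial homomorphism from an infinitesimal group scheme to an \'etale one). Your route is shorter granted the group-scheme machinery, but it is locked to the cocommutative case: without cocommutativity $K$ need not be normal and $H^{*}$ need not be commutative, so $\Spec H^{*}$ is unavailable --- which is precisely why the paper replaces duality by Hochschild cohomology in order to reach the more general Theorem C. Two small cautions: your parenthetical that the gluing step "can simply be quoted from Demazure--Gabriel and Sweedler" is circular as stated, since that gluing \emph{is} the theorem being proved (only the specific central-extension lemma may legitimately be quoted); and the bimultiplicativity of the descended commutator pairing, though standard, deserves a line of justification since it is the hinge of your final step.
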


As we can see Theorem B is a generalization of Theorem A without the assumption that the cocommutative Hopf algebra is primitively generated. Masuoka \cite{MA} further extends Theorem $B$ to include the case for all finite-dimensional connected Hopf algebras. Moreover he generalizes the result by providing the criteria when these Hopf algebras are semisimple.

\begin{thC}[Masuoka]
Suppose $\Chara(k)=p$ and $k=\bar{k}$. Let $H$ be a finite-dimensional connected Hopf algebra over $k$. Denote $K$ as the Hopf subalgebra generated by all primitive elements of $H$. Then the following are equivalent:
\begin{itemize}
\item[(1)] $H$ is semisimple.
\item[(2)] $K$ is semisimple.
\item[(3)] $K\cong\left(k N\right)^*$, for $N\cong (C_p)^n$.
\item[(4)] $H\cong\left(k G\right)^*$, for $G$ a $p$-group.
\end{itemize}
\end{thC}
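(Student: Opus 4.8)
The plan is to prove the implications $(1)\Rightarrow(2)$, $(2)\Leftrightarrow(3)$, $(3)\Rightarrow(4)$ and $(4)\Rightarrow(1)$, with essentially all of the work — and the use of Hochschild cohomology of coalgebras — concentrated in $(3)\Rightarrow(4)$. The implication $(4)\Rightarrow(1)$ is immediate: if $H\cong(kG)^*$ with $G$ a finite group, then as an algebra $H\cong\operatorname{Map}(G,k)\cong k^{|G|}$ because $k=\bar k$, so $H$ is semisimple. For $(1)\Rightarrow(2)$ I would invoke the Nichols--Zoeller theorem: since $K$ is a Hopf subalgebra, $H$ is free as a left $K$-module. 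If $H$ is semisimple then the trivial module $k$ is $H$-projective, hence a direct summand of a free $H$-module, hence — after restricting scalars and using freeness of $H$ over $K$ — a direct summand of a free $K$-module, so $k$ is $K$-projective. As $k$ is a simple $K$-module it coincides with its own projective cover, so $k$ is a direct summand of the left regular module $K$; thus $\varepsilon|_K$ splits, $K$ has a normalized left integral, and $K$ is semisimple by Maschke's theorem.

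For $(2)\Leftrightarrow(3)$ the implication $(3)\Rightarrow(2)$ is trivial, as $(kN)^*\cong k^{|N|}$ is semisimple; for $(2)\Rightarrow(3)$ I would apply Theorem A to $K$ itself. This is legitimate once one checks that $K$ is a finite-dimensional connected cocommutative Hopf algebra which is primitively generated. It is finite-dimensional as a subalgebra of $H$; its coradical is contained in $H_0=k$, so it is connected; one has $P(K)=P(H)$ because $P(H)\subseteq K\subseteq H$, and $K$ is by construction the subalgebra generated by $P(K)$; and $K$ is a quotient Hopf algebra of the restricted enveloping algebra $u(P(H))$, hence cocommutative. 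Theorem A then yields: $K$ is semisimple $\iff$ the $p$-operation on $P(K)=P(H)$ is bijective $\iff$ $K\cong(kN)^*$ with $N\cong(C_p)^n$, $n=\dim_k P(H)$.

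It remains to prove $(3)\Rightarrow(4)$. Assume $K\cong(kN)^*$; equivalently, by the previous paragraph and Hochschild's theorem, $P(H)$ is an abelian restricted Lie algebra with bijective $p$-operation. I claim it suffices to show that $H$ is commutative and semisimple. Indeed, $H$ connected means $H^*$ is a local finite-dimensional algebra; if moreover $H$ is commutative then $H^*$ is cocommutative, and if $H$ is semisimple then $H^*$ is cosemisimple; a cocommutative cosemisimple Hopf algebra over $\bar k$ equals the group algebra of its group $G=G(H^*)$ of grouplike elements, and locality of $H^*$ forces $G$ to be a $p$-group, so $H\cong(kG)^*$ as desired. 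To prove commutativity and semisimplicity I would work along the coradical filtration $k=H_0\subseteq H_1\subseteq\cdots\subseteq H=H_m$, which is also an algebra filtration, so $\gr H$ is a graded connected Hopf algebra; by the Taft--Wilson theorem $H_1=k\oplus P(H)$, so the subalgebra generated by $H_1$ is exactly $K\cong(kN)^*$, a split commutative semisimple algebra. Inducting up the filtration, the passage from the subalgebra generated by $H_n$ to that generated by $H_{n+1}$ is governed, via the deformation picture for filtered coalgebras, by Hochschild cohomology of the coalgebra $\gr H$ (equivalently of $H$) with coefficients in bicomodules assembled from the graded layers; the point would be to show that the relevant second cohomology (obstruction) groups vanish, by comparing with the Hochschild cohomology of the cocommutative connected Hopf algebra $u(P(H))\twoheadrightarrow K$ — where Theorem B pins down the semisimple situation — with the bijectivity of the $p$-operation on $P(H)$ making these comparisons isomorphisms. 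Vanishing of the obstructions then forces each successive subalgebra, and hence $H$ itself, to be commutative and of the form $(kG)^*$, in particular semisimple.

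The main obstacle is exactly this last inductive step: controlling the higher layers of the coradical filtration — the ``divided-power-type'' elements of $H$ that are invisible to $P(H)$ and to its restricted enveloping algebra — and establishing the vanishing of the pertinent second Hochschild cohomology groups of the coalgebra. All the remaining steps are either formal or direct appeals to Theorems A and B, the Nichols--Zoeller theorem and the Taft--Wilson theorem.
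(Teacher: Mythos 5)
The easy implications are fine and essentially match the paper: $(4)\Rightarrow(1)$ and $(3)\Rightarrow(2)$ because duals of group algebras are semisimple, $(1)\Rightarrow(2)$ via Nichols--Zoeller, and $(2)\Leftrightarrow(3)$ via Hochschild's theorem on $u(\mathfrak g)$. The reduction of $(3)\Rightarrow(4)$ to ``$H$ is commutative and semisimple'' is also the right move. But the heart of the theorem is precisely that step, and your plan for it has a genuine gap: you propose to climb the coradical filtration and show that ``the relevant second cohomology (obstruction) groups vanish.'' They do not vanish, and they cannot: the second Hochschild cohomology $\HL^2(k,L)$ of a connected Hopf subalgebra $L$ with coefficients in $k$ is exactly what parametrizes the new elements $z$ with $\Delta(z)=z\otimes 1+1\otimes z+u$ needed to enlarge $L$ inside $H$ (already $K=(kC_p)^*\subsetneq (kC_{p^2})^*=H$ gives a nontrivial class). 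If these groups vanished there would be no nontrivial extensions at all and $H$ would equal $K$. So the mechanism you name would, if true, contradict the existence of the very Hopf algebras the theorem is about.

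What the paper actually does is quite different and is worth contrasting. It does not kill the cohomology; it chooses good representatives. For $L=(kG)^*$ one can pick $2$-cocycles $u_i$ defined over $\mathbb F_p$, hence satisfying $u_i^p=u_i$ (Lemma \ref{groupdualcohomologyclass}). Feeding such a $u=\sum\alpha_iu_i$ into $\Delta(z)$ and computing $\Delta(z^{p^n})$ shows $z^{p^n}\notin L$ for all $n$, which forces the minimal relation $z^{p^l}+\lambda_{l-1}z^{p^{l-1}}+\cdots+\lambda_1z+a=0$ to be \emph{separable} ($\lambda_1\neq 0$ after replacing $z$ by a $p$-power). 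Then $H\cong L\#_\sigma(H/L^+H)$ with $H/L^+H$ a separable algebra, and semisimplicity follows from Blattner--Montgomery (Lemma \ref{irreduciblesemisimple}). Commutativity is a separate induction you have not addressed: one shows $F_jH\subseteq Z(H)$ step by step, using that $[y,z]$ is primitive and that applying $\ad$ of the separable relation for $y$ yields $\lambda_1[y,z]=0$, hence $[y,z]=0$. Both the ``$u^p=u$'' trick and the $\lambda_1\neq 0$ commutator argument are the actual content of the theorem; neither is present in, or recoverable from, the vanishing statement you propose.
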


We remark that in \cite{MA}, $H$ is said to be ``irreducible” for ``connected”.  Also, Theorem C is now extended to the generalized super-situation by Corollary 43 of \cite{Masuoka}.

In this short note, we give another proof of Theorem C different from the original one in \cite{MA}. Our approach is discussed in section \ref{idea}, where our aim is to prove Lemma \ref{irreduciblesemisimple}.  Suppose $\Chara (k)=p$ and $H$ is a finite-dimensional connected Hopf algebra over $k$. Denote $K$ as the Hopf subalgebra of $H$ generated by all primitive elements. Let $L$ be a Hopf subalgebra of $H$ such that $K\subseteq L$. Then there exists some $z\in H\setminus L$ such that the comultiplication of $z$ satisfies 
\begin{eqnarray*}
\Delta(z)=z\otimes 1+1\otimes z+u,
\end{eqnarray*}
where the term $u$ represents a none zero cohomological class in $\HL^2(L,k)$, i.e., the Hochschild cohomology of $L$ with coefficients in $k$. Moreover, we say the extension $L\subsetneq H$ is essential if there is no proper Hopf subalgebras between them. Now assume $H$ to be commutative and $L=(kG)^*$ for a $p$-group $G$. Lemma \ref{irreduciblesemisimple} says that in such essential extension $L\subsetneq H$, we can choose a particular $z$ satisfying more nice conditions. As a consequence, $H$ is semisimple. Now we filter $H$ by a sequence of essential extensions starting from $K$:
\begin{eqnarray*}
K=F_0H\subsetneq F_1H\subsetneq \cdots \subsetneq F_{n}H=H.
\end{eqnarray*}
In section \ref{MR}, we will prove $(2)$ implies $(4)$ in Theorem C by induction on the length $n$. The initial step for $n=0$ is just Theorem A due to Hochschild. In the inductive process, we can assume that $H$ is generated by $F_{n-1}H$ and some $z$ with the comultiplication described above. Notice that the main stumbling block is to show $H$ is commutative. The key is to use Lemma \ref{irreduciblesemisimple} for each step $F_{i-1}H\subsetneq F_iH$ and prove that $z$ commutes with $F_{i}$ for all $0\le i\le n-1$ inductively.

Following the main theorem \ref{TMR} and removing the assumption $k=\bar{k}$, we also prove two corollaries. Corollary \ref{c1} says that any semisimple connected Hopf algebra is commutative and Corollary \ref{c2} lists several equivalent conditions for a finite-dimensional connected Hopf algebra to be semisimple. One consequence is that the semisimplicity of such Hopf algebra is completely captured by the semisimplicity of the Hopf subalgebra generated by the first term of its coradical filtration.  

The application of Theorem C can be used to classify all $p^3$-dimensional connected Hopf algebras, which is completed in \cite{VWW,VW}. Suppose $\Chara(k)=p$ and $k=\bar{k}$. Let $H$ be a connected Hopf algebra of dimension $p^3$ over $k$. Assume $H$ is not primitively generated, otherwise it is isomorphic to $u(\mathfrak g)$ for some three-dimensional restricted Lie algebra $\mathfrak g$. Then we can find some Hopf subalgebra $L$ of dimension $p^2$ containing all the primitive elements of $H$. The structure of $L$ is already known, since all $p^2$-dimensional connected Hopf algebra has been classified in \cite{wang2012connected}. Moreover, $H$ can be obtained by adding one non-primitive element to $L$ whose comultiplication is described by $\HL^2(L,k)$. Now according to Theorem $C$, we not only know the structure of $H$ that is semisimple, but also know any nonsemsimple $H$ only arises from nonsemisimple $L$.

\begin{remark}\label{scdim}
Any semisimple Hopf algebra is finite-dimensional by \cite[pp. 107]{sweedlerHopfalgebra}. If furthermore it is connected, then the base field is of positive characteristic. Otherwise the Hopf subalgebra generated by all primitive elements is infinite-dimensional by \cite[Theorem 5.6.5]{montgomery1993hopf}.
\end{remark}

%\begin{Ack}
%The author thanks his PhD advisor Professor James Zhang. He is also grateful to Cris Negron, Linhong Wang and Guangbin Zhang for their valuable suggestions to this paper. 
%\end{Ack}

\section{Preliminary}
Throughout this paper, $k$ is a base field. Vector spaces, linear maps and tensor products are assumed to be over $k$ unless stated otherwise. Let $p$ be a prime number, $C_{p^n}$ is the cyclic group of order $p^n$ for a positive integer $n$. By standard notations, we use $(H,m,u,\Delta,\e,S)$ to denote an Hopf algebra. In this section, we recall some basic definitions and facts that will be used throughout the paper.

\begin{deff}\cite[Definitions 5.1.5, 5.2.1]{montgomery1993hopf}\label{cofil}
The \emph{coradical} $H_0$ of $H$ is the sum of all simple subcoalgebras of $H$. The Hopf algebra $H$ is \emph{connected} if $H_0$ is one-dimensional. For each $n\geq 1$, inductively set  \[H_n=\Delta^{-1}(H\otimes H_{n-1} + H_0\otimes H).\] The chain of subcoalgebras $H_0\subseteq H_1 \subseteq \ldots \subseteq H_{n-1}\subseteq H_n \subseteq\ldots $ is the \emph{coradical filtration} of $H$. 
\end{deff}

\begin{deff}\cite[Lemma 1.1]{cstefan1998hochschild}
Consider $k$ as a trivial $H$-bicomodule. We use $\HL^\bullet(k,H)$ to denote the \emph{Hochschild cohomology} of $H$ with coefficients in $k$. It can be computed as the homology of the complex $\left(H^{\otimes \bullet},d^\bullet\right)$, where $d^0=0$ and 
\begin{eqnarray*}
d^n(x)=1\otimes x-(\Delta\otimes \Id_{n-1})(x)+\cdots+(-1)^n(\Id_{n-1}\otimes \Delta)(x)+(-1)^{n+1}x\otimes 1,
\end{eqnarray*}
for $n\ge 1$ and any $x\in H^{\otimes n}$.
\end{deff}

\begin{deff}\cite[V. \S 7 Definition 4]{Jac}
A \emph{restricted Lie algebra} $\mathfrak g$ of characteristic $p$ is a Lie algebra of characteristic $p$ in which there is defined a map $a\to a^{[p]}$ such that
\begin{itemize}
\item[(1)] $(\alpha a)^{[p]}=\alpha^pa^{[p]}$,
\item[(2)] $(a+b)^{[p]}=a^{[p]}+b^{[p]}+\sum_{i=1}^{p-1}s_i(a,b)$, where $is_i(a,b)$ is the coefficient of $\lambda^{i-1}$ in $a\left(\ad(\lambda a+b)\right)^{p-1}$,
\item[(3)] $[ab^{[p]}]=a(\ad b)^p$.
\end{itemize}
for all $a,b\in \mathfrak g$ and $\alpha\in k$. Let $\mathfrak g$ be restricted, and $U(\mathfrak g)$ be the usual enveloping algebra. Suppose $B$ is the ideal in $U(\mathfrak g)$ generated by all $a^p-a^{[p]},x\in \mathfrak g$. Define $u(\mathfrak g)=U(\mathfrak g)/B$, which is called the \emph{restricted enveloping algebra} of $\mathfrak g$. 
\end{deff}

\begin{deff}\cite[Definitions 4.1.1, 7.1.1]{montgomery1993hopf}
A Hopf algebra $H$ \emph{measures} an algebra $A$ if there is a linear map $H\otimes A\to A$, given by $h\otimes a\to h\cdot a$, such that $h\cdot 1=\e(h)1$ and $h\cdot(ab)=\sum(h_1\cdot a)(h_2\cdot b)$, for all $h\in H,a,b\in A$. Moreover, assume that $\sigma$ is an invertible map in $\Hom_k(H\otimes H,A)$. The \emph{crossed product} $A\#_\sigma H$ of $A$ with $H$ is the set $A\otimes H$ as a vector space, with multiplication
\begin{eqnarray*}
(a\#h)(b\#k)=\sum a(h_1\cdot b)\sigma(h_2,k_1)\#h_3k_2,
\end{eqnarray*}
for all $h,k\in H$ and $a,b\in A$.
\end{deff}

\section{Extensions of connected Hopf algebras}\label{idea}
For the remaining of the paper, let $\chara(k)=p$, and $H$ be a finite-dimensional connected Hopf algebra. We use the following notations regarding $H$: 
\begin{itemize}
\item[(1)] The coradical filtration of $H$ is denoted by $\{H_n\}_{n\ge 0}$;
\item[(2)] The Hopf subalgebra of $H$ generated by $H_1$ is denoted by $K$;
\item[(3)] The primitive space of $H$ is denoted by $\mathfrak g$;
\item[(4)] The center of $H$ is denoted by $Z(H)$;
\item[(5)] The augmented ideal of $H$ is denoted by $H^+=\ker \e$;
\item[(6)] Define $\ad(x)y=[x,y]$ and $y(\ad x)=[y,x]$ for any $x,y\in H$.
\end{itemize}

\begin{rem}\label{dim}
Notice that $K\cong u(\mathfrak g)$ and $\dim H=p^n$ for some integer $n$ by \cite[Proposition 2.2(8)]{wang2012connected}.
\end{rem}

\begin{deff}
Let $L$ be a proper Hopf subalebra of $H$. Then $L\subsetneq H$ is called a \emph{weakly essential extension} if there is no proper Hopf subalgebra between them. If moreover $K\subseteq L$, we say it is an \emph{essential extension}.
\end{deff}

\begin{lem}\label{subgen}
Let $L\subsetneq H$ be an extension of  connected Hopf algebras. If there exists some $x\in H\setminus L$ satisfying $\Delta(x)=x\otimes 1+1\otimes x+u$, where $u\in L\otimes L$. Then the subalgebra generated by $L$ and $x$ is a Hopf subalgebra of $H$. Moreover if the extension is weakly essential, then $L$ is equal to $H$.
\end{lem}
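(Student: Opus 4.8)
The plan is to show first that $A$, the subalgebra of $H$ generated by $L$ and $x$, is a subcoalgebra, hence a Hopf subalgebra, and then to use minimality of a weakly essential extension to force $A=H$. For the first part I would argue that it suffices to check $\Delta(A)\subseteq A\otimes A$, since $A$ is already a subalgebra containing $1$, and the antipode condition $S(A)\subseteq A$ will follow: in a connected (hence pointed) Hopf algebra the antipode of a subbialgebra that is finite-dimensional is automatically contained in it, or one can see it directly on $x$ using $\Delta(x)=x\otimes 1+1\otimes x+u$ together with the counit axioms, which give $S(x)=-x-(\text{stuff in }L)$ because $u\in L\otimes L$ and $L$ is a Hopf subalgebra. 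So the crux is $\Delta(A)\subseteq A\otimes A$. Since $A$ is generated as an algebra by $L\cup\{x\}$ and $\Delta$ is an algebra map, it is enough to know $\Delta(\ell)\in A\otimes A$ for $\ell\in L$ — which holds because $L$ is a Hopf subalgebra, so $\Delta(L)\subseteq L\otimes L\subseteq A\otimes A$ — and $\Delta(x)\in A\otimes A$, which holds by hypothesis since $x\otimes 1,\,1\otimes x\in A\otimes A$ and $u\in L\otimes L\subseteq A\otimes A$. Therefore $\Delta$ restricts to an algebra map $A\to A\otimes A$, and combined with $\e(A)\subseteq k$ and the antipode remark, $A$ is a Hopf subalgebra of $H$.

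For the "moreover" part, suppose the extension $L\subsetneq H$ is weakly essential, i.e., there is no proper Hopf subalgebra strictly between $L$ and $H$. We have produced a Hopf subalgebra $A$ with $L\subseteq A\subseteq H$, and $A\neq L$ because $x\in A\setminus L$. By the definition of weakly essential extension the only Hopf subalgebra containing $L$ that is not equal to $L$ is $H$ itself, so $A=H$; in particular $L$ and $x$ generate $H$. I do not think this step hides any difficulty: it is immediate from unwinding the definition, once $A$ is known to be a Hopf subalgebra.

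The main obstacle, such as it is, lies entirely in the first paragraph — making sure the antipode is handled cleanly. One clean route is to invoke that any sub-bialgebra of a Hopf algebra which is a finite-dimensional pointed coalgebra is itself a Hopf algebra (the antipode of $H$ restricts), but if one wants to stay self-contained it is quick to compute $S$ on algebra generators: $S|_L$ is the antipode of $L$, and applying $(\,\cdot\,)$ to $\Delta(x)$ via $m(S\otimes\Id)\Delta(x)=\e(x)1=0$ yields $S(x)+x+(S\otimes\Id)(u)$-type terms all lying in $A$, so $S(x)\in A$; then $S(A)\subseteq A$ since $S$ is an anti-algebra map and $A$ is generated by elements whose images under $S$ lie in $A$. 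With that in hand the proof is complete.
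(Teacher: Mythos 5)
Your proof is correct and follows essentially the same route as the paper: check that $\Delta$ sends the algebra generators (namely $L$ and $x$) into $A\otimes A$ so that $A$ is a sub-bialgebra, handle the antipode, and then conclude $A=H$ directly from the definition of a weakly essential extension. The only divergence is the antipode step, where the paper simply cites that the sub-bialgebra is connected and hence automatically a Hopf subalgebra (Montgomery, Lemmas 5.2.12 and 5.2.10), while you verify $S(x)\in A$ by hand from $m(S\otimes\Id)\Delta(x)=\e(x)1$; that computation is fine, except that $\e(x)$ need not be $0$ since $u$ is not assumed normalized --- harmless, because $\e(x)1\in A$ in any case.
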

\begin{proof}
First of all, the subalgebra generated by $L$ and $x$ is a sub-bialgebra of $H$, which is connected by \cite[Lemma 5.2.12]{montgomery1993hopf}. Therefore it is a Hopf subalgebra because of \cite[Lemma 2.10]{montgomery1993hopf}. Moreover if the extension is weakly essential, then by definition it is equal to $H$.  
\end{proof}

\begin{lem}\label{centerforsemisimpleconnectedHopfalgebras}
If $K$ is semisimple, then $K\subseteq Z(H)$.
\end{lem}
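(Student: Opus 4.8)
The plan is to exploit the fact, recalled in Theorem A / Remark~\ref{dim}, that $K \cong u(\mathfrak{g})$ and that $K$ semisimple forces $\mathfrak{g}$ to be abelian with bijective $p$-operation, so that $K \cong (kN)^*$ with $N \cong (C_p)^n$. In particular $K$ is a \emph{commutative} Hopf algebra, and the adjoint action of $H$ on itself restricts to an action of $H$ on $K$ by algebra automorphisms (since $K$ is a normal Hopf subalgebra? — if not automatically available, one uses that $K$ is generated by the primitives $\mathfrak{g}$, and $\ad(h)$ preserves $\mathfrak{g}$ because $\mathfrak{g} = H_1 \cap H^+$ and the bracket of a primitive with anything lies in $H_1 \cap H^+$... this needs care). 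The goal is to show this adjoint action is trivial.

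First I would reduce to showing $\mathfrak{g} \subseteq Z(H)$: since $\mathfrak{g}$ generates $K$ as an algebra, $[\mathfrak{g}, H] = 0$ immediately gives $[K, H] = 0$. To show each $g \in \mathfrak{g}$ is central, I would look at the inner derivation $\ad(g) : H \to H$. Because $g$ is primitive, $\ad(g)$ is a coalgebra map up to the usual correction, and more importantly $(\ad g)^p = \ad(g^{[p]})$ in any associative algebra of characteristic $p$. Since $K$ is semisimple, the $p$-operation on $\mathfrak{g}$ is bijective, so every $g \in \mathfrak{g}$ can be written $g = h^{[p]}$ for some $h \in \mathfrak{g}$ — hence $\ad(g) = (\ad h)^p$ is a $p$-th power of a derivation. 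Iterating, $\ad(g)$ is an arbitrarily high $p$-th power of a derivation of the finite-dimensional algebra $H$; as $\ad h$ is then a nilpotent derivation (its iterates die on the finite-dimensional $H$), this forces $\ad(g) = 0$.

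Alternatively, and perhaps more cleanly, I would pass to the dual picture: $K$ semisimple means $K^*$ is a connected cocommutative Hopf algebra, and the action of $H$ on $K$ dualizes to a coaction making $K^*$ an $H$-module coalgebra; triviality of the action is equivalent to $K^*$ being in the center, which one reads off from $K^* \cong kN$ with $N$ a group of exponent $p$ together with the fact that the only group-like–compatible automorphisms coming from a connected $H$ must fix $N$ pointwise (a connected Hopf algebra has no nontrivial grouplikes, so it cannot act nontrivially by permuting the finitely many grouplikes of $K^*$). Either route reduces the lemma to the structure theorem for $K$ already supplied by Hochschild's Theorem A.

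The main obstacle I anticipate is \emph{justifying that $\ad(h)$ preserves $\mathfrak{g}$ — equivalently, that $H$ really does act on $K$ by Hopf/algebra endomorphisms}. This is where normality of $K$ in $H$ (or at least stability of $\mathfrak{g}$ under the adjoint action) must be established; I would prove it by the coradical-filtration argument, showing $[\mathfrak{g}, \mathfrak{g}] \subseteq \mathfrak{g}$ directly (the bracket of two primitives is primitive) and then that $[\mathfrak{g}, H] \subseteq K$ by induction on the coradical filtration degree, using that $\Delta[g, x] = [g,x]\otimes 1 + 1 \otimes [g,x] + (\text{lower-degree terms living in } K \otimes H + H \otimes K)$. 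Once stability of $\mathfrak{g}$ is in hand, the nilpotency-versus-$p$-th-power argument for $\ad(g)$ closes the proof quickly.
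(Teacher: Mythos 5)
Your overall strategy---reduce to $\mathfrak g\subseteq Z(H)$, exploit the identity $\ad(h^{[p]})=(\ad h)^p$ together with $\mathfrak g=k\mathfrak g^p$, and control brackets by induction on the coradical filtration---is essentially the paper's, but the step you lean on to conclude is a genuine gap. You assert that $\ad h$ is a nilpotent derivation of $H$ ``because its iterates die on the finite-dimensional $H$.'' Finite-dimensionality does not make a linear operator nilpotent, and there is no evident reason at this stage why $\ad h$ should be globally nilpotent on $H$: it preserves the coradical filtration but does not lower it, and you cannot kill the induced derivation on $\gr H$ by checking it on degree-one elements precisely because $H$ need not be generated by $H_1$ (that is the whole point of the case $K\subsetneq H$). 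So ``$\ad g$ is an arbitrarily high $p$-th power of a nilpotent operator'' is not available, and $\ad g=0$ does not follow from what you wrote. Your dual alternative is also not a proof: the adjoint action of $H$ on $K$ is only a measuring, not an action by Hopf algebra automorphisms, so there is no induced permutation of the grouplikes of $K^*$ to analyze.

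The gap is closed by making the nilpotency \emph{pointwise} and folding it into the induction, which is exactly what the paper does. Assume $[H_n,\mathfrak g]=0$ and take $z\in H_{n+1}$, so that $\Delta(z)=z\otimes1+1\otimes z+u$ with $u\in H_n\otimes H_n$. The inductive hypothesis kills the bracket of $u$ with $x\otimes1+1\otimes x$, so $[z,x]$ is primitive for every $x\in\mathfrak g$; since $\mathfrak g$ is abelian this gives $(\ad x)^2(z)=0$, hence $(\ad x)^p(z)=0$, hence $[x^{[p]},z]=\ad(x^{[p]})(z)=(\ad x)^p(z)=0$. Because $\mathfrak g=k\mathfrak g^p$ (in the paper's matrix form $\mathbf x^p=R\mathbf x$ with $R$ invertible---note this only says $\mathfrak g$ is \emph{spanned} by $p$-th powers, not that the $p$-map is surjective over a general field, so you must work with linear combinations $\sum\alpha_i\,(\ad h_i)^p$ rather than a single $h$ with $g=h^{[p]}$), this forces $[z,\mathfrak g]=0$ and completes the induction. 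In short: you assembled the right ingredients, including the key sub-lemma that brackets with primitives stay primitive one filtration step at a time, but the ``global nilpotency of $\ad h$'' shortcut fails; the two-step vanishing $(\ad x)^2(z)=0$, available only inside the inductive step, is what actually makes the $p$-th power collapse.
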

\begin{proof}
By Remark \ref{dim} and Hochschild's result \cite[Theorem 2.3.3]{montgomery1993hopf}, the restricted Lie algebra $\mathfrak g$ is abelian with $\mathfrak g=k\mathfrak g^p$. Fix a basis $\{x_i\,|\,1\le i\le d\}$ for it. Denote $\mathbf{x}=(x_1,x_2,\cdots,x_d)^T$ and $\mathbf{x}^p=(x_1^p,x_2^p,\cdots,x_d^p)^T$. We can write the restricted map for $\mathfrak g$ in a matrix form:
\begin{eqnarray*}
\mathbf{x}^p=R\mathbf{x},
\end{eqnarray*}
for some $R\in \text{GL}_d(k)$. It suffices to prove inductively that $\left[H_n,\mathfrak g\right]=0$ for all $n\ge 0$. Obviously, it is true for $H_0$ and $H_1$. In the following, suppose that $\left[H_n,\mathfrak g\right]=0$ for some $n\ge 1$. By \cite[Lemma 5.3.2(2)]{montgomery1993hopf}, for any $z\in H_{n+1}$, we have $\Delta(z)=z\otimes 1+1\otimes z+u$, where $u\in H_n\otimes H_n$. Therefore by induction
\begin{eqnarray*}
\Delta\left(\left[z,x_i\right]\right)&=&\left[\Delta(z),\Delta(x_i)\right]\\
&=&\left[z\otimes 1+1\otimes z+u,x_i\otimes 1+1\otimes x_i\right]\\
&=&\left[z,x_i\right]\otimes 1+1\otimes \left[z,x_i\right],
\end{eqnarray*}
which yields that $[z,x_i]$ is primitive for all $1\le i\le d$. Therefore we can write these relations in a matrix form as follows:
\begin{eqnarray*}
(\ad z)\mathbf{x}=A\mathbf{x},
\end{eqnarray*}
where $A\in M_d(k)$. Hence a simple calculation shows that:
\begin{eqnarray*}
(\ad z)\mathbf{x}^p=R(\ad z)\mathbf{x}=RA\mathbf{x}.
\end{eqnarray*}
By the definition of restricted Lie algebras, on the other hand, $[z,x_i^p]=z(\ad x_i)^p=0$ for all $1\le i\le d$. Hence $RA=0$ which implies that $A=0$.
\end{proof}

\begin{lem}\label{cocycle}
Let $L\subsetneq H$ be a proper Hopf subalgebra. Then there is a finite set $\{u_i\}\subseteq L\otimes L$, whose image is a basis in $\HL^2(k,L)$. Moreover if $K\subseteq L$, then there exists an element $z\in H\setminus L$ such that $\Delta(z)=z\otimes 1+1\otimes z+\sum\alpha_iu_i$, where the $\alpha_i\in k$ are not all zero.
\end{lem}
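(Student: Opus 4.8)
The plan is to relate the Hochschild 2-cocycles of $L$ to the first piece of the coradical filtration of $H$ that sticks out beyond $L$. First I would recall that since $L\subsetneq H$ are connected, the coradical filtrations are compatible, i.e. $L_n = L\cap H_n$, and since $L\ne H$ there is a smallest $n\ge 1$ with $H_n\not\subseteq L$; fix such an $n$ and pick $z\in H_n\setminus L$. By \cite[Lemma 5.3.2(2)]{montgomery1993hopf} we may arrange (after subtracting a suitable element of $H_1\subseteq L$, using that $H_1\subseteq K\subseteq L$ in the essential case, or just directly) that $\Delta(z)=z\otimes 1+1\otimes z+u$ with $u\in H_{n-1}\otimes H_{n-1}$, and by minimality of $n$ we have $H_{n-1}\subseteq L$, so $u\in L\otimes L$. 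Coassociativity of $\Delta$ on $z$ together with $(\e\otimes\Id)\Delta(z)=z=(\Id\otimes\e)\Delta(z)$ translates precisely into the statement $d^2(u)=0$ and $(\e\otimes\Id)(u)=(\Id\otimes\e)(u)=0$; thus $u$ is a normalized Hochschild 2-cocycle of $L$.

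Next I would show $u$ is not a coboundary. If $u=d^1(x)=1\otimes x-\Delta(x)+x\otimes 1$ for some $x\in L$, then $z-x$ would be a primitive element of $H$, hence $z-x\in\mathfrak g\subseteq H_1\subseteq K\subseteq L$, forcing $z\in L$, a contradiction. Hence the class of $u$ in $\HL^2(k,L)$ is nonzero. Finally, to get the stated form I would fix once and for all a finite set $\{u_i\}\subseteq L\otimes L$ of normalized 2-cocycles whose images form a basis of $\HL^2(k,L)$ — this is possible because $L$ is finite-dimensional, so $\HL^2(k,L)$ is finite-dimensional, and one can lift a basis to genuine normalized cocycles. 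Writing $[u]=\sum\alpha_i[u_i]$ in $\HL^2(k,L)$, we have $u-\sum\alpha_iu_i=d^1(x)$ for some $x\in L$, and replacing $z$ by $z-x$ we obtain $\Delta(z)=z\otimes1+1\otimes z+\sum\alpha_iu_i$; the $\alpha_i$ are not all zero since $[u]\ne 0$.

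The main obstacle is the bookkeeping at the start: producing a $z$ whose ``tail'' $u$ actually lands in $L\otimes L$ rather than merely in $H_{n-1}\otimes H_{n-1}$, and verifying that the cocycle/coboundary conditions for Hochschild cohomology of the coalgebra $L$ are exactly the coassociativity and counit identities for $\Delta(z)$. This requires the compatibility $L_n=H_n\cap L$ of coradical filtrations under a Hopf (or even coalgebra) inclusion, together with the Taft–Wilson-type description of $H_n$ modulo $H_{n-1}$ from \cite[Lemma 5.3.2(2)]{montgomery1993hopf}; the rest is formal.
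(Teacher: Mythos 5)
Your proposal is correct and follows essentially the same route as the paper: pick $z$ in the first coradical-filtration piece $H_d$ not contained in $L$, use \cite[Lemma 5.3.2(2)]{montgomery1993hopf} to write $\Delta(z)=z\otimes 1+1\otimes z+u$ with $u\in H_{d-1}\otimes H_{d-1}\subseteq L\otimes L$, note that $u$ is a $2$-cocycle, adjust $z$ by an element of $L$ to replace $u$ by $\sum\alpha_i u_i$, and conclude the $\alpha_i$ are not all zero since otherwise $z$ would be primitive up to an element of $L$ and hence lie in $H_1\subseteq K\subseteq L$. The only cosmetic difference is that you verify the cocycle condition via coassociativity while the paper writes it as $d^2(u)=d^2d^1(-z)=0$ in the ambient complex; the substance is identical.
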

\begin{proof}
By the definition of the complex $(L^{\otimes \bullet},d^\bullet)$, $\HL^2(k,L)=\ker d^2/\Img d^1$. Hence there exists a set $\{u_i\}\subseteq \Ker d^2\subseteq L\otimes L$, whose image is a basis in $\HL^2(k,L)$. It is finite since $L$ is finite-dimensional. Furthermore we assume that $K\subseteq L$. As a result of \cite[Theorem 5.2.2(1)]{montgomery1993hopf}, there is a minimal number $d\ (\ge 2)$ such that $L_d\neq H_d$, whence we choose some $z\in H_d\setminus L_d\subseteq H\setminus L$. By \cite[Lemma 5.3.2(2)]{montgomery1993hopf}, $\Delta(z)=z\otimes 1+1\otimes z+u$, where $u\in H_{d-1}\otimes H_{d-1}=L_{d-1}\otimes L_{d-1}\subseteq L\otimes L$. Consider $(L^{\otimes \bullet},d^\bullet)$ as a subcomplex of $(H^{\otimes \bullet},d^\bullet)$. Hence $d^2(u)=d^2d^1(-z)=0$ and $u$ is a $2$-cocycle in the subcomplex.  In regard to the chosen basis $\{u_i\}$ in $\HL^2(k,L)$, there are coefficients $\alpha_i\in k$ and an element $y\in L$ such that 
\begin{eqnarray*}
u-\sum\alpha_iu_i&=&d^1(y)=1\otimes y-\Delta(y)+y\otimes 1;\\
\Delta(z+y)&=&(z+y)\otimes 1+1\otimes (z+y)+\sum\alpha_iu_i.
\end{eqnarray*} 
Finally, $z+y\not\in L$ because of $z\not\in L$ and $y\in L$. The $\alpha_i$ are not all zero, otherwise $z+y\in H_1=K_1\subseteq L$.
\end{proof}

\begin{lem}\label{groupdualcohomologyclass}
Let $H=\left(k G\right)^*$, where $G$ is a $p$-group. Then $H$ is connected and there exists a set $\{u_i\,|\,1\le i\le n\}\subseteq H\otimes H$ satisfying 
\begin{itemize}
\item[(1)] The image of $\{u_i\}$ is a basis in $\HL^2\left(k, H\right)$;
\item[(2)] $u_i^p=u_i$ in $H\otimes H$ for all $1\le i\le n$.
\end{itemize}
\end{lem}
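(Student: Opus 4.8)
The plan is to identify the complex $(H^{\otimes\bullet},d^\bullet)$ computing $\HL^\bullet(k,H)$ with the inhomogeneous cochain complex of the finite group $G$ with trivial coefficients, and then to use the prime subfield $\mathbb{F}_p\subseteq k$ to produce cocycle representatives taking values in $\mathbb{F}_p$, for which the identity $u^p=u$ becomes automatic. First I would dispose of connectedness: since $G$ is a finite $p$-group and $\chara k=p$, the augmentation ideal of $kG$ is nilpotent, so $kG$ is local with $kG/J(kG)\cong k$; dualizing, the coradical $H_0\cong\bigl(kG/J(kG)\bigr)^*$ is one-dimensional, so $H$ is connected.

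Next I would set up the identification. Write $H=(kG)^*$ as the algebra $\operatorname{Map}(G,k)$ of functions on $G$ with pointwise operations; then $H^{\otimes n}\cong\operatorname{Map}(G^n,k)$, again with pointwise multiplication, the unit $1\in H$ becomes the constant function $1$, and $(\Delta f)(g,h)=f(gh)$. A direct computation then gives, for $y\in\operatorname{Map}(G,k)=H$ and $x\in\operatorname{Map}(G^2,k)=H\otimes H$,
\begin{align*}
(d^1y)(g_0,g_1)&=y(g_1)-y(g_0g_1)+y(g_0),\\
(d^2x)(g_0,g_1,g_2)&=x(g_1,g_2)-x(g_0g_1,g_2)+x(g_0,g_1g_2)-x(g_0,g_1),
\end{align*}
and analogously in higher degrees, so $(H^{\otimes\bullet},d^\bullet)$ is exactly the inhomogeneous cochain complex of $G$ with coefficients in the trivial module $k$. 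Since each $d^n$ is an alternating sum of pullbacks along set maps $G^{n+1}\to G^n$, the subspaces $\operatorname{Map}(G^n,\mathbb{F}_p)$ form a subcomplex, and as $G^n$ is finite the full complex is its extension of scalars along $\mathbb{F}_p\hookrightarrow k$, i.e. $\operatorname{Map}(G^n,k)=\operatorname{Map}(G^n,\mathbb{F}_p)\otimes_{\mathbb{F}_p}k$.

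To conclude, I would invoke that $k$ is flat (even free) over $\mathbb{F}_p$, so cohomology commutes with $-\otimes_{\mathbb{F}_p}k$; hence $\HL^2(k,H)\cong V\otimes_{\mathbb{F}_p}k$, where $V$ is the second cohomology of the $\mathbb{F}_p$-subcomplex. Choosing $2$-cocycles $\bar u_1,\dots,\bar u_n\in\operatorname{Map}(G^2,\mathbb{F}_p)$ representing an $\mathbb{F}_p$-basis of $V$, their images $u_i\in\operatorname{Map}(G^2,k)=H\otimes H$ are $2$-cocycles whose classes form a $k$-basis of $\HL^2(k,H)$, which gives (1). For (2), each $u_i$ takes values in $\mathbb{F}_p=\{a\in k:a^p=a\}$, and since the multiplication of $H\otimes H$ is pointwise on $G^2$ we get $u_i^p(g,h)=u_i(g,h)^p=u_i(g,h)$ for all $g,h\in G$, i.e. $u_i^p=u_i$ in $H\otimes H$.

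The only step that calls for real care is the cochain-level identification of $(H^{\otimes\bullet},d^\bullet)$ with the group cochain complex --- keeping track of the unit $1\in H$, the direction of $\Delta$, and the signs in $d^n$ --- after which everything reduces to a formal flat base change together with the Frobenius map. A secondary (routine) point is the nilpotence of the augmentation ideal of $kG$ for a finite $p$-group in characteristic $p$, which one may either cite or prove by induction on $|G|$.
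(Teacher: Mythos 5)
Your proof is correct and takes essentially the same approach as the paper: the paper also descends to $(\mathbb{F}_pG)^*$, chooses cocycle representatives there (written in the characteristic-function basis $\{f_x\otimes f_y\}$, which is just your pointwise description of $\operatorname{Map}(G^2,\mathbb{F}_p)$), and uses exactness of $-\otimes_{\mathbb{F}_p}k$ together with $\alpha^p=\alpha$ for $\alpha\in\mathbb{F}_p$ to get $u_i^p=u_i$. Your explicit identification with the inhomogeneous group cochain complex is a harmless repackaging of the same computation.
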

\begin{proof}
Consider $k\supset \mathbb F_p$ as a field extension. It is well known that $\mathbb F_pG$ and $kG$ are scalar local because $G$ is a $p$-group. Therefore by \cite[Proposition 5.2.9(2)]{montgomery1993hopf}, their duals are connected. As shown in \cite[Example 1.3.6]{montgomery1993hopf}, $L:=(\mathbb F_pG)^*$ has a basis $\{f_x|x\in G\}$, where $f_x$ is the characteristic function on the element $x\in G$ and its multiplication and comultiplication structures are described as follows:
\begin{eqnarray*}
f_xf_y&=&\delta_{x,y}f_x,\\
\Delta\left(f_x\right)&=&\sum_{uv=x}f_u\otimes f_v.
\end{eqnarray*}
Obviously, there exists a set $\{u_i|1\le i\le n\}\subseteq L\otimes L$, whose image becomes a basis in $\HL^2(\mathbb F_p,L)$. Moreover each $u_i=\sum \alpha_i f_{x_i}\otimes f_{y_i}$ for some $\alpha_i\in \mathbb F_p$. Hence 
\begin{eqnarray*}
u_i^p&=&\left(\sum \alpha_i f_{x_i}\otimes f_{y_i}\right)^p\\
&=&\sum \alpha_i^pf_{x_i}^p\otimes f_{y_i}^p\\
&=&\sum \alpha_if_{x_i}\otimes f_{y_i}\\
&=&u_i
\end{eqnarray*}
for all  $i$. Furthermore, $\HL^2(\mathbb F_p,L)\otimes k$ is naturally isomorphic to $\HL^2(k,H)$ as $k$-vector spaces because the base change functor $\otimes_{\mathbb F_p}k$ is exact. Then $\{u_i\}$ becomes a basis for the latter through the natural isomorphism.
\end{proof}

\begin{lem}\label{irreduciblesemisimple}
Let $H$ be commutative, and $L\subsetneq H$ be an essential extension. Furthermore assume that $L=\left(k G\right)^*$ for some $p$-group $G$. Then there exists some element $z\in H$ satisfying the following conditions:
\begin{itemize}
\item[(1)] $L$ and $z$ generate $H$ as an algebra.
\item[(2)] $\Delta\left(z\right)=z\otimes 1+1\otimes z+u$, where $u$ is a $2$-cocycle in the complex $\left(L^{\otimes \bullet},d^\bullet\right)$.
\item[(3)] $z^{p^n}\not\in L$ for all $n\ge 0$.
\item[(4)] $z$ satisfies some relation: $z^{p^l}+\lambda_{l-1}z^{p^{l-1}}+\cdots+\lambda_{1}z+a=0$ in $H$, where $\lambda_i\in k$ for all $1\le i\le l-1$ with $\lambda_{1}\neq 0$ and $a\in L$.
\end{itemize}
Moreover, $H$ is semisimple.
\end{lem}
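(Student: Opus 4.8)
The plan is to produce a single element $z\in H$ realizing all four conditions at once and then read off semisimplicity from (4). Since $L=(kG)^{*}$, Lemma \ref{groupdualcohomologyclass} provides a basis $\{u_{i}\}$ of $\HL^{2}(k,L)$, lifted to $2$-cocycles in $L\otimes L$, with $u_{i}^{p}=u_{i}$; feeding this particular basis into Lemma \ref{cocycle} (its proof works for any such choice) yields $z\in H\setminus L$ with $\Delta(z)=z\otimes1+1\otimes z+u$, where $u=\sum_{i}\alpha_{i}u_{i}$ is a $2$-cocycle and the $\alpha_{i}\in k$ are not all zero, giving (2); and since an essential extension is weakly essential, Lemma \ref{subgen} gives (1). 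It then remains to verify (3) and (4) for this $z$, and finally semisimplicity.

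The engine is a Frobenius computation inside the commutative characteristic-$p$ ring $H\otimes H$: from $\Delta(z)=z\otimes1+1\otimes z+u$ one obtains $\Delta(z^{p^{m}})=\Delta(z)^{p^{m}}=z^{p^{m}}\otimes1+1\otimes z^{p^{m}}+u^{p^{m}}$ with $u^{p^{m}}=\sum_{i}\alpha_{i}^{p^{m}}u_{i}$ (using $u_{i}^{p}=u_{i}$), so every $z^{p^{m}}$ lies in $V:=\{x\in H:\Delta(x)-x\otimes1-1\otimes x\in L\otimes L\}$. On $V$ the rule $x\mapsto[\Delta(x)-x\otimes1-1\otimes x]\in\HL^{2}(k,L)$ is a well-defined $k$-linear map $\phi$ (the bracketed term is a $2$-cocycle, as in the proof of Lemma \ref{cocycle}), with $\ker\phi=L$ — here one uses that if $\Delta(x)-x\otimes1-1\otimes x=d^{1}(y)$ with $y\in L$ then $x+y$ is primitive, hence lies in $\mathfrak g\subseteq K\subseteq L$ (this is where the hypothesis $K\subseteq L$ enters) — and $\phi(x^{p})=F(\phi(x))$ for the Frobenius-semilinear bijection $F\colon\sum\beta_{i}[u_{i}]\mapsto\sum\beta_{i}^{p}[u_{i}]$ of $\HL^{2}(k,L)$. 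Consequently $\phi(z^{p^{m}})=F^{m}\phi(z)\neq0$ for every $m$ (since $\phi(z)\neq0$ and $F$ is injective), so $z^{p^{m}}\notin\ker\phi=L$, which together with $z\notin L$ is (3). For (4): all $F^{m}\phi(z)$ lie in the finite-dimensional $k$-space $\phi(V)$, so there is a least $l\geq1$ with $F^{l}\phi(z)=\sum_{j=0}^{l-1}c_{j}F^{j}\phi(z)$, and $c_{0}\neq0$ — otherwise, extracting an $F$ and invoking its injectivity would express $F^{l-1}\phi(z)$ through lower powers, contradicting minimality of $l$. Applying $\phi$ to $z^{p^{l}}-\sum_{j<l}c_{j}z^{p^{j}}$ gives $0$, so this element equals some $a\in L$, that is, $z^{p^{l}}+\lambda_{l-1}z^{p^{l-1}}+\cdots+\lambda_{1}z+a=0$ with $\lambda_{1}=-c_{0}\neq0$, which is (4).

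For the final assertion: by (1) and commutativity of $H$, the $L$-algebra homomorphism $L[T]\to H$, $T\mapsto z$, is surjective, and by (4) it factors through a surjection $L[T]/(q(T))\twoheadrightarrow H$ with $q(T)=T^{p^{l}}+\lambda_{l-1}T^{p^{l-1}}+\cdots+\lambda_{1}T+a$. Since $(kG)^{*}$ has a basis of orthogonal idempotents summing to $1$ (see the proof of Lemma \ref{groupdualcohomologyclass}), $L\cong k^{|G|}$ as a $k$-algebra, hence $L[T]/(q(T))\cong\prod_{g\in G}k[T]/(q_{g}(T))$, where each $q_{g}$ has the same shape as $q$ with its constant term the $g$-component of $a$. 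As $q_{g}'(T)=\lambda_{1}$ is a nonzero constant, $q_{g}$ is squarefree, so each $k[T]/(q_{g}(T))$ is a finite product of fields; thus $L[T]/(q(T))$ is semisimple, and therefore so is its quotient $H$.

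I expect the main obstacle to be condition (4): squeezing out of the Frobenius tower $z,z^{p},z^{p^{2}},\dots$ an additive polynomial relation over $L$ whose \emph{linear} coefficient does not vanish. Three ingredients must cooperate there — the special basis with $u_{i}^{p}=u_{i}$, the commutativity-driven identity for $\Delta(z^{p^{m}})$, and the injectivity of $F$ on $\HL^{2}(k,L)$ — and the resulting nonvanishing of $\lambda_{1}$, i.e.\ separability of $q$, is exactly what forces $L[T]/(q(T))$, and hence $H$, to be semisimple. A secondary point requiring care is the verification that $\phi$ is well defined with kernel precisely $L$, which is again where the essentiality hypothesis $K\subseteq L$ is used.
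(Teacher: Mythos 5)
Your construction of $z$ and the verification of (1)--(3) coincide with the paper's: the same basis $\{u_i\}$ with $u_i^p=u_i$ from Lemma \ref{groupdualcohomologyclass} is fed into Lemma \ref{cocycle}, Lemma \ref{subgen} gives generation, and the Frobenius identity $\Delta(z^{p^n})=z^{p^n}\otimes 1+1\otimes z^{p^n}+\sum\alpha_i^{p^n}u_i$ gives (3). You diverge in two places, both legitimately. For (4) the paper simply quotes \cite[Theorem 4.5]{wang2012connected} to obtain a relation $z^{p^l}+\sum\lambda_jz^{p^j}+a=0$, uses (3) to locate the least $m$ with $\lambda_m\neq 0$, and then replaces $z$ by $z^{p^m}$; your Frobenius-orbit argument in $\HL^2(k,L)$ (with $\ker\phi=L$, which is exactly where $K\subseteq L$ enters, as in the proof of Lemma \ref{cocycle}) produces the relation with nonzero linear coefficient directly, is self-contained, and avoids having to re-check (1)--(3) for a replaced element. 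For the final assertion the paper writes $H\cong L\#_\sigma(H/L^+H)$, notes that $H/L^+H$ is generated by $\bar z$ satisfying a separable $p$-polynomial, and invokes the Maschke-type theorem of Blattner--Montgomery \cite[Theorem 2.6]{blattner1989crossed}; your presentation of $H$ as a quotient of $L[T]/(q(T))\cong\prod_{g\in G}k[T]/(q_g(T))$ with each $q_g$ squarefree (since $q_g'=\lambda_1\neq 0$) is more elementary and exploits the commutativity of $H$ directly. One small caution: your deduction that $c_0\neq 0$ by ``extracting an $F$'' requires $p$-th roots of the $c_j$ in $k$; this is harmless where the lemma is used ($k=\bar k$ in Theorem \ref{TMR}), and in general it can be patched by extending scalars to $\bar k$ (which preserves the linear independence of $\phi(z),F\phi(z),\dots,F^{l-1}\phi(z)$) before taking the roots.
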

\begin{proof}
Choose a finite set $\{u_i\}\subseteq L\otimes L$, which represents a basis in $\HL^2(k,L)$. By Lemma \ref{groupdualcohomologyclass},  we can assume that $u_i^p=u_i$ for all $i$. Moreover apply Lemma \ref{cocycle} to $L\subsetneq H$, there exists an element $z\in H\setminus L$ such that 
\begin{eqnarray*}
\Delta(z)=z\otimes 1+1\otimes z+\sum\alpha_iu_i, 
\end{eqnarray*}
where the $\alpha_i\in k$ are not all zero. Because of Lemma \ref{subgen}, $H$ is generated by $L$ and $z$ as an algebra. A simple calculation shows that:
\begin{eqnarray*}
\Delta\left(z^{p^n}\right)&=&z^{p^n}\otimes 1+1\otimes z^{p^n}+\left(\sum \alpha_iu_i\right)^{p^n}\\
&=&z^{p^n}\otimes 1+1\otimes z^{p^n}+\sum \alpha_i^{p^n}u_i
\end{eqnarray*}
for all $n\ge 0$. By definition $d^1\left(z^{p^n}\right)=-\sum \alpha_i^{p^n}u_i$, which is never a 2-coboundary in the complex $(L^{\otimes \bullet},d^\bullet)$. Therefore $z^{p^n}\not\in L$ for all $n\ge 0$. Furthermore by \cite[Theorem 4.5]{wang2012connected}, we see $z$ satisfies some relation 
\begin{eqnarray*}
z^{p^l}+\lambda_{l-1}z^{p^{l-1}}+\cdots+\lambda_{1}z+a=0,
\end{eqnarray*}
in $H$, where all $\lambda_i\in k$ with $a\in L$. We want the conditions (1)-(4) being satisfied. But this is true only when removing $\lambda_1\neq 0$ from (4). By (3) we have the smallest index $m$ with $\lambda_m\neq 0$. Re-choosing $z^{p^m}$ for $z$, we see that the conditions are all satisfied. 

Finally, $H$ is isomorphic as an algebra to a certain crossed product $L\#_\sigma (H/L^+H)$ by \cite[Theorem 7.2.11]{dascalescu2000hopf}. The quotient Hopf algebra $H/L^+H$ is generated by the image $\bar{z}$ chosen above satisfying
\begin{eqnarray*}
\bar{z}^{p^l}+\lambda_{l-1}\bar{z}^{p^{l-1}}+\cdots+\lambda_{1}\bar{z}+\e(a)=0,
\end{eqnarray*}
where all $\lambda_i\in k$ with $\lambda_1\neq 0$. Hence it is a polynomial with distinctive roots in $\bar{k}$. Then the quotient Hopf algebra is semisimple and so is $H$ by \cite[Theorem 2.6]{blattner1989crossed}. 
\end{proof}

\section{Main Results}\label{MR}
Recall that $\chara(k)=p$, and $H$ is a finite-dimensional connected Hopf algebra. We use $K$ to denote the Hopf subalgebra generated by the primitive space $\mathfrak g$ of $H$.
\begin{thm}\label{TMR}
Suppose $k$ is algebraically closed. The following are equivalent:
\begin{itemize}
\item[(1)] $H$ is semisimple.
\item[(2)] $K$ is semisimple.
\item[(3)] $K\cong\left(k N\right)^*$, for $N\cong (C_p)^n$.
\item[(4)] $H\cong\left(k G\right)^*$, for $G$ a $p$-group.
\end{itemize}
\end{thm}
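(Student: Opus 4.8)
The plan is to prove the four conditions equivalent via the cycle $(2)\Rightarrow(4)\Rightarrow(3)\Rightarrow(2)$ together with the easy implication $(4)\Rightarrow(1)$ (the dual of a group algebra of a finite group is always semisimple) and $(1)\Rightarrow(2)$. For the last one, recall $K\cong u(\mathfrak g)$ by Remark \ref{dim}; if $H$ is semisimple then since $H$ is finite-dimensional and $K$ is a Hopf (hence separable? — no) subalgebra, one uses that $H$ is free as a left $K$-module by the Nichols--Zoeller theorem, so semisimplicity of $H$ descends to $K$. The implications $(3)\Rightarrow(2)$ is immediate since $(kN)^*$ with $N=(C_p)^n$ is semisimple, and $(2)\Rightarrow(3)$ is exactly Theorem A applied to $K=u(\mathfrak g)$: semisimplicity of $K$ forces the $p$-power map on $\mathfrak g$ to be bijective, whence $K\cong(kN)^*$ for $N=(C_p)^n$. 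So the entire content is $(2)\Rightarrow(4)$.

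For $(2)\Rightarrow(4)$ I would argue by induction on the length $n$ of a maximal chain of essential extensions
\begin{eqnarray*}
K=F_0H\subsetneq F_1H\subsetneq \cdots \subsetneq F_nH=H,
\end{eqnarray*}
each $F_{i-1}H\subsetneq F_iH$ being essential (such a chain exists because $\dim H=p^n$ is finite). The base case $n=0$ is $H=K$, which by Theorem A is $(kG)^*$ with $G=(C_p)^n$, a $p$-group. For the inductive step, assume $L:=F_{n-1}H\cong(kG')^*$ for a $p$-group $G'$ (this uses the inductive hypothesis applied to the chain for $L$, whose first term is still $K$ and which is one step shorter), and $L\subsetneq H$ is essential. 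The crucial point, as the introduction emphasizes, is to show $H$ is commutative; once that is known, Lemma \ref{irreduciblesemisimple} applies to the essential extension $L\subsetneq H$ and produces $z$ with $L$ and $z$ generating $H$, with $\Delta(z)=z\otimes1+1\otimes z+u$, and with the monic $p$-polynomial relation having nonzero linear coefficient, which forces $H$ to be semisimple; and a semisimple commutative connected Hopf algebra over $\bar k$ is $(kG)^*$ for a $p$-group $G$ by Theorem B (dualized: such $H^*$ is cocommutative connected — wait, one must check $H^*$ is connected; alternatively use that $H\cong L\#_\sigma(H/L^+H)$ with both factors duals of $p$-groups, so $\dim H$ is a $p$-power and $H$ is a commutative semisimple Hopf algebra, hence $H^*$ is cocommutative semisimple, hence by Theorem B $H^*\cong (kG)^*$... this needs $H^*$ connected — better: $H$ commutative semisimple over $\bar k$ means $H\cong k^{|G|}$ as an algebra for a set of size $|G|$, and the group structure comes from $\Delta$; connectedness of $H$ gives that the grouplikes of $H^*$ form a group $G$ and $H\cong (kG)^*$, with $|G|=p^n$ so $G$ is a $p$-group).

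To prove commutativity of $H$, I follow the strategy sketched in the introduction: by Lemma \ref{centerforsemisimpleconnectedHopfalgebras}, $K\subseteq Z(H)$; I would show by induction on $i$, for $0\le i\le n-1$, that $z$ commutes with every element of $F_iH$, where $z$ is the chosen generator of $H$ over $L=F_{n-1}H$. For the base case $i=0$: $F_0H=K\subseteq Z(H)$ already. For the step, suppose $z$ commutes with $F_{i-1}H$; I apply Lemma \ref{irreduciblesemisimple} to the essential extension $F_{i-1}H\subsetneq F_iH$ to write $F_iH$ as generated by $F_{i-1}H$ and some $w$ with a $p$-polynomial relation $w^{p^l}+\lambda_{l-1}w^{p^{l-1}}+\cdots+\lambda_1 w+a=0$, $\lambda_1\neq0$, $a\in F_{i-1}H$. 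The commutator $[z,w]$ lies in $H$; using $\Delta(w)=w\otimes1+1\otimes w+u$ with $u\in F_{i-1}H\otimes F_{i-1}H$ and $\Delta(z)=z\otimes1+1\otimes z+(\text{term in }L\otimes L)$, together with the inductive hypothesis that $z$ commutes with $F_{i-1}H$ (and $K$ central), a computation of $\Delta([z,w])$ as in Lemma \ref{centerforsemisimpleconnectedHopfalgebras} shows $[z,w]$ is primitive, hence lies in $\mathfrak g\subseteq K\subseteq Z(H)$. Then applying $\ad z$ to the $p$-polynomial relation: $(\ad z)(w^{p^l})=0$ since $[z,w]$ is central (so $(\ad z)$ is a derivation killing $p$-th powers of $w$ repeatedly), similarly all higher terms vanish, leaving $\lambda_1[z,w]=0$, and $\lambda_1\neq0$ forces $[z,w]=0$. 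Since $F_iH$ is generated by $F_{i-1}H$ and $w$, and $z$ commutes with both, $z$ commutes with $F_iH$. Taking $i=n-1$ gives that $z$ commutes with $L$; since $L$ is commutative and $L,z$ generate $H$, $H$ is commutative.

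The main obstacle, as flagged, is exactly this commutativity argument — specifically verifying that $[z,w]$ is primitive and that $\ad z$ annihilates the $p$-power terms in the defining relation of $w$; the bookkeeping with the cocycle terms $u$ in the comultiplications must be handled carefully, but the pattern is the matrix computation already carried out in the proof of Lemma \ref{centerforsemisimpleconnectedHopfalgebras}. Everything else is an assembly of Theorems A and B, Lemma \ref{irreduciblesemisimple}, the Nichols--Zoeller freeness for $(1)\Rightarrow(2)$, and the crossed-product semisimplicity criterion already invoked in Lemma \ref{irreduciblesemisimple}.
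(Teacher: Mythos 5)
Your proposal is correct and follows essentially the same route as the paper: the easy implications via Nichols--Zoeller and Hochschild's theorem, then an outer induction on the length of a chain of essential extensions starting at $K$, with the key commutativity step handled by an inner induction showing each $F_iH$ commutes with the top generator $z$, using primitivity of the commutator and the $p$-polynomial relation with $\lambda_1\neq 0$ from Lemma \ref{irreduciblesemisimple}. The only differences are cosmetic (you phrase the inner induction as ``$z$ commutes with $F_iH$'' rather than ``$F_iH\subseteq Z(H)$'', and you supply your own argument identifying a commutative semisimple connected $H$ with $(kG)^*$ where the paper cites a reference).
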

\begin{proof}
Condition $(1)$ implies $(2)$ because of \cite[Corollary 2.2.2(2)]{montgomery1993hopf} and Nichols-Zoeller Theorem \cite{nichols1989hopf}. By \cite[Corollary 2.3.5]{montgomery1993hopf}, $(2)$ implies $(3)$. Since group algebras are cosemisimple, their duals are semisimple. Hence $(3)$ implies $(2)$ and $(4)$ implies $(1)$. Because $H$ is finite-dimensional, there exists a finite chain of Hopf subalgebras
\begin{eqnarray*}\label{chain}
K=F_0H\subsetneq F_1H\subsetneq \cdots \subsetneq F_{n}H=H,
\end{eqnarray*}
where each step is an essential extension. We prove $(3)$ implies $(4)$ by induction on the length $n$. When $K=H$, there is nothing to prove. We assume the statement is true for $n\le d$ and suppose that $n=d+1$. By induction when $1\le i\le d$, 
\begin{eqnarray*}
F_iH\cong (kG_i)^*,
\end{eqnarray*}
for $G_i$ a $p$-group. Furthermore if $H$ is commutative, by Lemma \ref{irreduciblesemisimple}, $H$ is semisimple. Hence it is the dual of a group algebra by \cite[Theorem 2.3.1]{montgomery1993hopf}. Remark \ref{dim} gives the order of the group. Thus the inductive step is complete.

It remains to show that $H$ is commutative. We complete it by proving $F_jH\subseteq Z(H)$ inductively again for all $j$. When $j=0$, we have $F_0H=K\subseteq Z(H)$ by Lemma \ref{centerforsemisimpleconnectedHopfalgebras}. Now assume it is true for $j=m$ and let $j=m+1$. In the essential extension $F_dH\subsetneq H$, by Lemma \ref{subgen}, there exists some $z\in H\setminus F_dH$ satisfying: (1) $z$ and $F_dH$ generate $H$; (2) $\Delta\left(z\right)=z\otimes 1+1\otimes z+u$, where $u\in F_dH\otimes F_dH$. Similarly by Lemma \ref{irreduciblesemisimple}, $F_{m+1}H$ is generated by $F_{m}H$ and some $y\in F_{m+1}H\setminus F_{m}H$ such that: (1) $\Delta\left(y\right)=y\otimes 1+1\otimes y+v$, where $v\in F_mH\otimes F_mH$; (2) $y$ satisfies some relation
\begin{eqnarray*}
y^{p^l}+\lambda_{l-1}y^{p^{l-1}}+\cdots+\lambda_{1}y+a=0,
\end{eqnarray*} 
where $\lambda_i\in k$ with $\lambda_{1}\neq 0$ and $a\in F_{m}H$. By induction, $F_{m+1}H\subseteq Z(H)$ if and only if $[F_{m+1}H,z]=0$ if and only if $[y,z]=0$. We wish to show this last $[y,z]=0$. Since $F_m\subseteq Z(H)$ and $F_dH$ is commutative, we have
\begin{eqnarray*}
\Delta\left(\left[y,z\right]\right)&=&\left[\Delta(y),\Delta(z)\right]\\
&=&\left[y\otimes 1+1\otimes y+v,z\otimes 1+1\otimes z+u\right]\\
&=&\left[y,z\right]\otimes 1+1\otimes \left[y,z\right].
\end{eqnarray*}
Hence we can write $[y,z]=x$ for some primitive element $x\in K$ and 
\begin{eqnarray*}
0&=&\left[y^{p^l}+\lambda_{l-1}y^{p^{l-1}}+\cdots+\lambda_{1}y+a,z\right]\\
&=&\left(\ad y\right)^{p^l}(z)+\lambda_{l-1}\left(\ad y\right)^{p^{l-1}}(z)+\cdots+\lambda_1\left[y,z\right]+\left[a,z\right]\\ 
&=&\lambda_1x.
\end{eqnarray*}
Therefore $[y,z]=0$ for $\lambda_1\neq 0$.
\end{proof}
\begin{cor}\label{c1}
If $H$ is semisimple, then $H$ is commutative.
\end{cor}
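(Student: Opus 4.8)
The plan is to reduce Corollary \ref{c1} to the algebraically closed case already settled in Theorem \ref{TMR}, by extending scalars to the algebraic closure $\bar{k}$. Concretely, I would set $\bar{H} = H \otimes_{k} \bar{k}$, a finite-dimensional Hopf algebra over $\bar{k}$, check that $\bar{H}$ inherits both connectedness and semisimplicity from $H$, apply Theorem \ref{TMR} to conclude that $\bar{H}$ is the dual of a $p$-group algebra (hence commutative), and finally descend commutativity from $\bar{H}$ back to $H$.

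For connectedness I would argue on the dual side. Since $H$ is finite-dimensional and connected, $H^{*}$ is a local algebra whose residue field is $H_{0}^{*} \cong k$; after base change $\bar{H}^{*} \cong H^{*} \otimes_{k} \bar{k}$, and since $J(H^{*}) \otimes_{k} \bar{k}$ is a nilpotent ideal with quotient $(H^{*}/J(H^{*})) \otimes_{k} \bar{k} \cong \bar{k}$ a field, the algebra $\bar{H}^{*}$ is again local. Dualizing back shows $\bar{H}$ is connected. For semisimplicity, the cleanest route is the Larson--Sweedler form of Maschke's theorem: a finite-dimensional Hopf algebra is semisimple exactly when a nonzero left integral $t$ satisfies $\e(t) \neq 0$. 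As $t \otimes 1$ is a left integral of $\bar{H}$ with $\e_{\bar{H}}(t \otimes 1) = \e_{H}(t)$, semisimplicity of $H$ passes to $\bar{H}$.

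With $\bar{H}$ a finite-dimensional connected semisimple Hopf algebra over the algebraically closed field $\bar{k}$, Theorem \ref{TMR} gives $\bar{H} \cong (\bar{k}\,G)^{*}$ for some $p$-group $G$, which is commutative. Since $H = H \otimes 1$ is a $k$-subalgebra of $\bar{H}$, this forces $ab = ba$ for all $a, b \in H$, so $H$ is commutative.

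I expect the only real obstacle to be the transfer of semisimplicity along $\bar{k}/k$: this extension may be inseparable, so one cannot simply invoke ``the Jacobson radical commutes with base change''. The point is that the integral/Maschke criterion is manifestly stable under extension of the base field, which bypasses the issue entirely; everything else is routine base-change bookkeeping.
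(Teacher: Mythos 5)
Your proof is correct and follows essentially the same route as the paper: extend scalars to $\bar{k}$, check that connectedness and semisimplicity survive the base change, and invoke Theorem \ref{TMR}. The paper justifies the transfer of semisimplicity by noting that $H$ is a separable algebra, whereas you use the integral form of Maschke's theorem and verify connectedness via locality of $\bar{H}^{*}$ --- precisely the bookkeeping the paper itself spells out in the proof of Corollary \ref{c2} --- so the two arguments are the same in substance.
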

\begin{proof}
By \cite[Corollary 2.2.2]{montgomery1993hopf}, $H$ is a separable $k$-algebra. Without loss of generality, we can assume $k$ to be algebraically closed of characteristic $p\neq 0$ by a base field extension. Then the result follows from Theorem \ref{TMR}.
\end{proof}

\begin{cor}\label{c2}
The following are equivalent:
\begin{itemize}
\item[(1)] $H$ is semismple.
\item[(2)] $\e(\int^r_H)\neq 0$.
\item[(3)] $\e(\int^l_H)\neq 0$.
\item[(4)] $K$ is semisimple.
\item[(5)] $\mathfrak g$ is abelian and $\mathfrak g=k\mathfrak g^p$.
\item[(6)] $\e(\int^r_K)\neq 0$.
\item[(7)] $\e(\int^l_K)\neq 0$.
\end{itemize}
\end{cor}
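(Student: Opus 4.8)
The plan is to prove Corollary \ref{c2} by leveraging Theorem \ref{TMR} together with the standard integral criterion for semisimplicity, reducing wherever possible to the already-established equivalences. First I would recall Maschke's theorem for Hopf algebras (Larson--Sweedler; see \cite[Theorem 2.2.1]{montgomery1993hopf}): a finite-dimensional Hopf algebra $A$ is semisimple if and only if $\e(\int^r_A)\neq 0$, if and only if $\e(\int^l_A)\neq 0$. Applying this with $A=H$ gives the equivalence of (1), (2), (3) immediately, and applying it with $A=K$ gives the equivalence of (4), (6), (7). So the content is reduced to showing (1) $\Leftrightarrow$ (4) $\Leftrightarrow$ (5), none of which involves integrals.

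Next I would handle (4) $\Leftrightarrow$ (5). By Remark \ref{dim} we have $K\cong u(\mathfrak g)$, so $K$ is semisimple iff $u(\mathfrak g)$ is semisimple, and by Hochschild's theorem (\cite[Theorem 2.3.3]{montgomery1993hopf}, already invoked in the proof of Lemma \ref{centerforsemisimpleconnectedHopfalgebras}) this holds iff $\mathfrak g$ is abelian with $\mathfrak g=k\mathfrak g^p$. That gives (4) $\Leftrightarrow$ (5) with no further work. For (1) $\Leftrightarrow$ (4): when $k$ is algebraically closed this is exactly the equivalence of conditions (1) and (2) in Theorem \ref{TMR}. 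To remove the hypothesis $k=\bar{k}$, I would use a base-field extension argument exactly as in Corollary \ref{c1}: by \cite[Corollary 2.2.2]{montgomery1993hopf} semisimplicity of a finite-dimensional Hopf algebra is preserved and detected under extension of the (perfect, or arbitrary) base field, and $K\otimes_k\bar k$ is the Hopf subalgebra of $H\otimes_k\bar k$ generated by its primitives since $\mathfrak g\otimes_k\bar k$ is the primitive space of $H\otimes_k\bar k$ (the coradical filtration commutes with flat base change). Hence $H$ is semisimple iff $H\otimes_k\bar k$ is, iff $K\otimes_k\bar k$ is (by Theorem \ref{TMR} over $\bar k$), iff $K$ is.

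The main obstacle I anticipate is the base-change step for (1) $\Leftrightarrow$ (4): one must check carefully that forming the Hopf subalgebra generated by the primitives commutes with the extension $k\subseteq\bar k$, i.e. that $(H\otimes_k\bar k)$ has primitive space $\mathfrak g\otimes_k\bar k$ and that the subalgebra this generates is $K\otimes_k\bar k$. This follows from flatness of $\bar k$ over $k$ (so that $\Delta^{-1}$ and the passage to subcoalgebras behave well, making $H_n\otimes_k\bar k=(H\otimes_k\bar k)_n$), but it deserves an explicit sentence. Everything else is a direct citation: Maschke's theorem for the integral equivalences and Hochschild's theorem for (4) $\Leftrightarrow$ (5). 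I would close by assembling the implications into a single cycle, e.g. (2) $\Leftrightarrow$ (1) $\Leftrightarrow$ (3) and (6) $\Leftrightarrow$ (4) $\Leftrightarrow$ (7) from Maschke, (4) $\Leftrightarrow$ (5) from Hochschild, and (1) $\Leftrightarrow$ (4) from Theorem \ref{TMR} after base change.
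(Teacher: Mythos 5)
Your proposal is correct and follows essentially the same route as the paper: Maschke's theorem for the integral conditions, Hochschild's theorem (via $K\cong u(\mathfrak g)$) for $(4)\Leftrightarrow(5)$, and a base-field extension to $\bar k$ followed by Theorem \ref{TMR} for $(1)\Leftrightarrow(4)$. The only point where you are slightly less precise is the base-change step you yourself flag: flatness alone does not make the coradical filtration commute with field extension, and the paper instead argues that the Jacobson radical $J$ of $H^*$ satisfies $J\otimes E=\mathrm{rad}\,(H\otimes E)^*$ (being nilpotent of codimension one), so that $(H\otimes E)_n=H_n\otimes E$ by \cite[Proposition 5.2.9(2)]{montgomery1993hopf}.
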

\begin{proof}
The equivalence of conditions $(1),(2)$ and $(3)$ is Maschke's Theorem \cite{larson1969associative}. That $(4)$ is equivalent to $(5)$ is Hochschild's result. Let $E\supset k$ be a field extension, and $J$ be the Jacobson radical of $H^*$. We see $J\otimes E$ is the Jacobson radical of $(H\otimes E)^*$ because it is nilpotent and has codimension one. Therefore by \cite[Proposition 5.2.9(2)]{montgomery1993hopf}, $(H\otimes E)_n=H_n\otimes E$ for any $n\ge 0$. By Maschke's theorem \cite{larson1969associative} again, the semisimplicity of Hopf algebras is preserved by base change \cite[Corollary 2.2.2]{montgomery1993hopf}. Therefore we can extend the base filed $k$ to its algebraically closure and apply Theorem \ref{TMR}.
\end{proof}


\begin{thebibliography}{99}
\bibitem{andruskiewitsch2000finite} N.~Andruskiewitsch, About finite dimensional Hopf algebras, \emph{Contemp.~Math.}, 294, Amer. Math. Soc., Providence, RI, (2002), 1--57.
\bibitem{blattner1989crossed} R.J.~Blattner and S.~Montgomery, Crossed products and Galois extensions of Hopf algebras, \emph{Pacific~J.~Math.} 137 (1989), no. 1, 37--54. 
\bibitem{dascalescu2000hopf} S.~D{\u{a}}sc{\u{a}}lescu, C.~N{\u{a}}st{\u{a}}sescu and \c{S}.~Raianu, Hopf algebra: An introduction, vol. 235, Marcel Dekker, New York (2001).
\bibitem{demazure1970groupes} M.~Demazure and P.~Gabriel, \emph{Groupes alg{\'e}briques I}, North Hollad, Amsterdam, 1970.
%\bibitem{etingof1998finite} P.~Etingof and S.~Gelaki, On finite-dimensional semisimple and cosemisimple Hopf algebras in positive characteristic, \emph{Internat.~Math.~Res.~Notices}, 16(1998), 851-864.
%\bibitem{etingof2000classification} \bysame, The classification of triangular semisimple and cosemisimple Hopf algebras over an algebraically closed field, \emph{Internat.~Math.~Res.~Notices}, 5(2000), 223-234.
\bibitem{hochschild1954representations} G.~Hochschild, Representations of restricted Lie algebras of characteristic $p$, \emph{Proc.~Amer.~Math.~Soc.} 5, (1954), 603--605.
\bibitem{Jac}  N.~Jacobson, \emph{Lie Algebras}, Dover Publications Inc., New York, 1979.
\bibitem{larson1988finite} R.G.~Larson and D.E.~Radford, Finite dimensional cosemisimple Hopf algebras in characteristic $0$ are semisimple,  \emph{J.~Algebra} 117(1988), 267-289.
\bibitem{larson1969associative} \bysame, An associative orthogonal bilinear form for Hopf algebras,  \emph{J.~Algebra} 91(1969), 75-93.
\bibitem{Masuoka} A.~Masuoka, Harish-Chandra pairs for algebraic affine supergroup schemes over an arbitrary field, \emph{Transform.~Groups} 17 (2012), no. 4, 1085--1121.
\bibitem{MA} A.~Masuoka, Semisimplicity criteria for irreducible Hopf algebras in positive characteristic, \emph{Proc.~Amer.~Math.~Soc.} 137 (2009), no. 6, 1925--1932.
\bibitem{montgomery1993hopf}  S.~Montgomery, \emph{Hopf Algebras and Their Actions on Rings},  Amer.~Math.~Soc., 82(1993).
%\bibitem{montgomery1998classifying} \bysame, Classifying finite-dimensional semisimple Hopf algebras, \emph{Contemp.~Math.}, 229(1998), 265-280.
\bibitem{Na} M.~Nagata, Complete reducibility of rational representations of a matric group, \emph{J.~Math.~Kyoto~Univ.} 1 (1961/1962), 87--99.
 \bibitem{VWW} V.~Nguyen, L.~Wang and X.~Wang, Classification of connected Hopf algebras of dimension $p^3$ I, \emph{J. Algebra} 424 (2015), 473--505.
 \bibitem{VW} V.~Nguyen, L.~Wang and X.~Wang, Primitive deformations of quantum $p$-groups, to appear \emph{Algebr.~Represent.~Theory.}
\bibitem{nichols1989hopf} W.D.~Nichols and M.B.~Zoeller, A Hopf algebra freeness theorem, \emph{Amer.~J.~Math.} 111 (1989), no. 2, 381--385.
\bibitem{cstefan1998hochschild}D.~\c{S}tefan and F.V.~Oystaeyen, Hochschild cohomology and the coradical filtration of pointed coalgebras: applications, \emph{J. Algebra} 210 (1998), no. 2, 535--556. 
\bibitem{sweedler1971connected}M.E.~Sweedler, Connected fully reducible affine group schemes in positive characteristic are abelian, \emph{J.~Math.~Kyoto~Univ.} 11 (1971), 51--70. 
\bibitem{sweedlerHopfalgebra}\bysame, \emph{Hopf algebras}, Benjamin, New York, 1969.
\bibitem{wang2012connected} X.~Wang, Connected Hopf algebras of dimension $p^2$, \emph{J.~Algebra}, 391 (2013), 93--113.
\end{thebibliography}
\end{document}